\numberwithin{equation}{section}
\newtheorem{theorem}{Theorem}[section]
\newtheorem{lemma}[theorem]{Lemma}
\newtheorem{prop}[theorem]{Proposition}
\newtheorem*{Thm}{Theorem}
\theoremstyle{definition}
\newtheorem{remark}[theorem]{Remark}
\newtheorem{example}[theorem]{Example}
\newtheorem{question}[theorem]{Question}
\newcommand{\abs}[1]{\left\vert#1\right\vert}
\newcommand{\R}{\mathbb R}
\newcommand{\N}{\mathbb N}
\newcommand{\Z}{\mathbb Z}
\begin{document}
\title[Singular measures with strong annular decay]{On a class of singular measures satisfying a strong annular decay condition}

\author[Arroyo]{\'Angel Arroyo}
\address{Department of Mathematics and Statistics, University of Jyv\"askyl\"a, PO~Box~35, FI-40014 Jyv\"askyl\"a, Finland}
\email{angel.a.arroyo@jyu.fi}

\author[Llorente]{Jos\'e G. Llorente}
\address{Departament de Matem\`atiques, Universitat Aut\`onoma de Barcelona, 08193 Bellaterra, Barcelona, Spain}
\email{jgllorente@mat.uab.cat}

\maketitle

\let\thefootnote\relax\footnotetext{\hspace{-7pt}\emph{Keywords:} annular decay condition,doubling measure,  Bernoulli product, metric measure space. 

MSC2010: 28A75, 30L99.

Partially supported by grants MTM2017-85666-P, 2017 SGR 395.}

\begin{abstract}
A metric measure space $(X, d, \mu )$ is said to satisfy the \textit{strong annular decay condition} if there is a constant $C>0$ such that 
$$
\mu \big( B(x, R) \setminus B(x,r) \big ) \leq C\, \frac{R-r}{R}\, \mu (B(x,R))
$$ 
for each $x\in X$ and all $0<r \leq R$. If $d_{\infty}$ is the distance induced by the $\infty$-norm in $\R^N$, we construct examples of singular measures $\mu$ on $\R^N$ such that $(\R^N, d_{\infty}, \mu )$ satisfies the strong annular decay condition.           
\end{abstract}

\section{Introduction}\label{sec:1}

A metric measure space $(X, d, \mu)$ is a metric space endowed with a positive Borel measure. Hereafter we will assume that $0 < \mu (B) < \infty$ for every ball $B\subset X$. Let $(X,d, \mu)$ be a metric measure space and $\delta \in (0,1]$. We say that  $(X, d, \mu)$ satisfies a $\delta$-\textit{annular decay condition} ($\delta$- ADC) if there exists a constant $C>0$ such that 
\begin{equation} \label{delta-ADC}
\mu \big( B(x,R)\setminus B(x,r) \big) \leq C \Big ( \frac{R-r}{R} \Big)^{\delta} \, \mu ( B(x,R)) 
\end{equation}
for each $x\in X$ and all $0<r \leq R$. Whenever the ambient metric space is fixed we will often say that the measure $\mu$ itself satisfies a $\delta$-ADC. The case $\delta = 1$ is special in some senses. The $1$-ADC is often known as the \textit{strong annular decay condition} in the literature. Observe that the $1$-ADC implies the $\delta$-ADC for any $\delta \in (0,1]$. 

\

The  $\delta$-ADC is closely connected to the doubling property. We say that a positive Borel measure $\mu$  on a metric space $(X,d)$ is \textit{doubling} if there is a constant $D>0$ such that 
\begin{equation*}
\mu (B(x,2R)) \leq D \, \mu (B(x,R))
\end{equation*}
for each $x\in X$ and any $R>0$. 

\

It is easy to see that a measure satisfying a $\delta$-ADC for some $\delta \in (0,1]$ is doubling (with a doubling constant depending on $\delta$ and the constant in the $\delta$-ADC). Conversely, if $(X,d)$ is geodesic then any doubling measure on $X$ satisfies a $\delta$-ADC for some $\delta \in (0,1)$ only depending on the doubling constant (\cite{B}, see also \cite{AL1} for an elementary proof when $X= \R^N$). Thus in a geodesic metric space, the doubling condition is equivalent to the $\delta$-ADC for some $\delta \in (0,1)$.      

\

The $\delta$-ADC was apparently introduced by Colding-Minicozzi in manifolds (\cite{CM}) and, independently, by Buckley (\cite{B}) in metric spaces. In the last years the $\delta$-ADC has been successfully used in several problems of Harmonic Analysis and Geometric Function Theory: when studying  reverse H\"{o}lder inequalities and characterizations of $A_{\infty}$ in metric spaces (see \cite{KiS1,KiS2,KiS3}), Hardy inequalities  and $T(b)$ theorems (\cite{AR}), capacity estimates (\cite{BB}) and also in connection to the regularity  of functions satisfying certain mean value properties (see \cite{AGG, AL2}).

\begin{remark} Observe that if $(X, d, \mu)$ satisfies a  $\delta$-ADC then  $\mu \{ y: d(x,y) = r \} = 0$ for each $x\in X$ and any $r>0$, that is, $\mu$ does not charge mass over spheres. This is the reason why in \eqref{delta-ADC} it is irrelevant whether the balls are open or closed.   
\end{remark}

In what follows we will restrict to the case $X= \R^N$ and $d=d_p$, the induced  distance by the $p$-norm in $\R^N$, for $1 \leq p \leq \infty$.  Occasionally, $B_p(a,r)$ will denote the ball centered at $a\in \R^N$ and radius $r>0$ with respect to $d_p$. Our main concern in this paper is the regularity of measures satisfying a $1$-ADC with respect to the distance $d_{\infty}$. 

\

Regarding the case $0 < \delta < 1$, the existence of singular doubling measures on $\R^N$ is a classical fact (\cite{K,Z}). See \cite{BA,CFK,L} for connections of singular doubling measures to problems in Geometric Function Theory and PDE's. We will provide explicit examples of singular doubling measures in Subsection \ref{sec:2.2}. As for the case $\delta = 1$, the situation is more delicate. It is straightforward to check that if $N=1$ then any positive measure on $\R$ (endowed with any norm-induced distance) satisfies a $1$-ADC if and only if $\mu$ is absolutely continuous with bounded density. However, this is not so clear if $N\geq 2$. In the higher dimensional case, the problem seems to be very sensitive with respect to the geometry of the distance. Our main result is the following. 

\begin{Thm}\label{theorem}
Let $N\geq 2$ and let $d_{\infty}$ be the distance induced by the $\infty$-norm in $\R^N$. Then there exists a singular measure $\mu$ on $\R^N$ such that $(\R^N , d_{\infty}, \mu )$ satisfies  the $1$-annular decay condition. 
\end{Thm}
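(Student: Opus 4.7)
The plan is to exploit the cubical structure of $d_\infty$-balls and construct $\mu$ as a self-similar Bernoulli product measure on $[0,1]^N$. The first (purely geometric) observation is that for any $x \in \R^N$ and $0 < r \leq R$, the cubical annulus $B_\infty(x,R) \setminus B_\infty(x,r)$ is contained in the union of $2N$ axis-aligned slabs
\begin{equation*}
S_i^{\pm} \;=\; \bigl\{y \in B_\infty(x,R) : \pm(y_i - x_i) \in (r, R]\bigr\}, \qquad 1 \leq i \leq N,
\end{equation*}
so the $1$-ADC reduces to a uniform slab bound $\mu(S_i^{\pm}) \leq C\,(R-r)/R \cdot \mu(B_\infty(x,R))$ for each $i$ and each sign.

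The construction I would try is the self-similar measure on $[0,1]^N$ generated by the IFS of the $2^N$ dyadic homotheties, with weights $\{w_\sigma\}_{\sigma \in \{0,1\}^N}$ satisfying $\sum_\sigma w_\sigma = 1$ together with the \emph{slab-balancing} conditions
\begin{equation*}
\sum_{\sigma:\,\sigma_i = 0} w_\sigma \;=\; \tfrac{1}{2}, \qquad i = 1, \ldots, N.
\end{equation*}
These $N+1$ linearly independent conditions leave $2^N - N - 1$ free parameters; for $N = 2$ this is exactly the one-parameter family $(w_{00}, w_{01}, w_{10}, w_{11}) = (a, b, b, a)$ with $a + b = 1/2$, while for $N \geq 3$ the freedom is substantially larger. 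For any non-uniform choice $w_\sigma \not\equiv 2^{-N}$, the AM--GM inequality forces the geometric mean of the $w_\sigma$ to be strictly less than $2^{-N}$, and a standard martingale argument applied to the density of the level-$n$ approximation shows that $\mu$ is mutually singular with $\mathcal{L}^N$; since $0 < w_{\min} \leq w_\sigma \leq w_{\max} < 1$ the measure $\mu$ is automatically doubling. Note that this is a genuinely non-product construction, which is necessary because for a product measure the $1$-ADC on $\R^N$ with $d_\infty$ would force each factor to be absolutely continuous in $\R$, contradicting singularity.

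The bulk of the work is verifying the $1$-ADC. The slab-balancing condition, iterated via self-similarity, gives the exact property: on every dyadic sub-cube $Q$ of $[0,1]^N$ and for every coordinate direction, any dyadic sub-slab of $Q$ of relative width $2^{-k}$ carries mass $2^{-k}\mu(Q)$. Hence the slab estimate holds with a universal constant whenever the face of $S_i^{\pm}$ is aligned with the dyadic grid that also contains $B_\infty(x,R)$. The main obstacle, and the key technical step, is extending this to arbitrary centers $x$ and radii $r, R$: the idea would be to write $B_\infty(x,R)$ as a maximal disjoint union of dyadic sub-cubes of side comparable to $R - r$ together with boundary ``fringe'' pieces adjacent to its $2N$ faces, apply the exact dyadic slab estimate on each sub-cube, and control the fringe contributions via the doubling property and the self-similar structure of $\mu$. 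Since $\mu$ charges no hyperplane (a direct consequence of the marginal-preserving construction), the resulting $1$-ADC then extends to all $0 < r \leq R$ without boundary atoms. The delicate point I expect is carrying out the fringe estimation uniformly across scales while choosing the $w_\sigma$ asymmetric enough to keep $\mu$ singular yet close enough to $2^{-N}$ that the fringe constants remain bounded.
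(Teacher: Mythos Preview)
Your proposal follows essentially the same route as the paper: construct a self-similar Bernoulli product with weights satisfying a coordinate-slab balancing condition (so that in every grid cube a sub-slab of relative width $t$ carries exactly a fraction $t$ of its mass), verify singularity via the AM--GM inequality and the law of large numbers for the level densities, observe doubling from $w_{\min}>0$, and reduce the $1$-ADC to a slab estimate on the $2N$ faces of the annulus. The paper works triadically with the additional symmetry $p_\nu=a_{|\nu|}$, but this and your dyadic choice are cosmetic.

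Two points where your write-up should be sharpened. First, the extension step you describe does not close as written. Decomposing $B_\infty(x,R)$ into grid cubes of side comparable to $R-r$ means that $S_i^{\pm}$ meets each such cube in a sub-slab of relative width $\approx 1$, so the exact slab identity on those cubes gives nothing; you are then left summing the masses of $\sim (R/(R-r))^{N-1}$ boundary cubes, which is again a slab estimate at a non-grid scale --- circular. The correct (and simpler) move, which is what the paper does, is to cover $B_\infty(x,R)$ by at most $2^N$ grid cubes $Q_n$ of side comparable to $R$, not $R-r$: then $S_i^{\pm}\cap Q_n$ is a sub-slab of $Q_n$ of width at most $R-r$, the exact slab identity gives $\mu(S_i^{\pm}\cap Q_n)\le 3^n(R-r)\,\mu(Q_n)\lesssim \tfrac{R-r}{R}\mu(Q_n)$, and doubling gives $\mu(Q_n)\simeq\mu(B_\infty(x,R))$.

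Second, your closing worry is unfounded: there is no tension between singularity and a bounded ADC constant. Any non-uniform choice of positive weights satisfying the slab-balancing conditions yields a singular measure, and the ADC constant depends only on $w_{\min}$ (through the doubling constant), not on how close the weights are to $2^{-N}$.
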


The organization of the paper is as follows. \Cref{sec:2} describes an special class of probability measures on the cube $Q_0 = [-\frac{1}{2},\frac{1}{2})^N$, the so called Bernoulli products. Subsection \ref{sec:2.1} provides the fundamental properties of Bernoulli products that will be needed in the paper. The material in this subsection is probably known but we have included it for completeness. Subsection \ref{sec:2.2} introduces a particular class of doubling Bernoulli products. The proof of the Theorem is contained in \Cref{sec:3}. Finally, \Cref{sec:4} is devoted to further results, questions  and comments.    

\

Through the paper, $m_N$ will denote Lebesgue $N$-dimensional measure. If $x\in \R^N$ and $r>0$ we write $Q(x,r) = B_{\infty} (x,r) $ to denote the cube centered at $x$ of side $2r$. All the cubes considered  in the paper are assumed to have  faces parallel to the coordinate hyperplanes.

\section{Bernoulli products}\label{sec:2}

\subsection{Basic definitions and properties}\label{sec:2.1}

We introduce a class of Borel probability measures on $Q_0 = [-\frac{1}{2},\frac{1}{2})^N \subset \R^N$ called \textit{Bernoulli products} which we further extend to the whole $\R^N$ by periodization. Apart from Lebesgue measure (which is a "trivial" particular case), we will see that any other Bernoulli product results in a singular measure. 

\

We will need some preliminary notation. Fix an odd integer  $p = 2q+1$  and consider the family $\mathcal{F}_n$ of $p$-adic subcubes of $Q_0$ of the generation $n$, that is
\begin{equation*}
\mathcal{F}_n = \Bigg\{ \prod_{j=1}^N \bigg [-\frac{1}{2} + \frac{k_j -1}{p^n}, \,  -\frac{1}{2} +\frac{k_j}{p^n} \bigg) \ : \  k_j = 1,2,\ldots,p^n \Bigg\}.
\end{equation*}
For each $x\in Q_0$ and any $n = 0,1,2,\ldots $ denote by $Q_n (x) $ the unique cube belonging to $\mathcal{F}_n$ so that $x\in Q_n (x)$. The fact that the division number is odd is not important but it will be technically convenient in the next subsection. 

\

Let $\mathcal{I} = \{ -q,\ldots, -1,0, 1,\ldots, q \}^N $. Note that, whenever $Q_{n-1} \in  \mathcal{F}_{n-1}$, 
the $p^N$ descendants of $Q_{n-1}$ in $\mathcal{F}_n$ can be labeled using $\mathcal{I}$ in a natural way. Indeed, if  
$$
Q_{n-1} = \prod_{j=1}^N \bigg[-\frac{1}{2} + \frac{k_j -1}{p^{n-1}}, \, -\frac{1}{2} + \frac{k_j}{p^{n-1}} \bigg ),
$$
where $1 \leq k_j \leq p^{n-1} $, then we define, for each  $\nu = (\nu_1, \nu_2, \ldots , \nu_N ) \in \mathcal{I}$,
\begin{equation} \label{Q nu}
Q_{n}^{\nu} = \prod_{j=1}^N \bigg[-\frac{1}{2} + \frac{k_j-1}{p^{n-1}} + \frac{q + \nu_j }{p^n}, \, -\frac{1}{2} + \frac{k_j -1}{p^{n-1}} + \frac{q + \nu_j +1}{p^n}\bigg )
\end{equation}
so that $\{Q_n^{\nu} \}_{\nu \in \mathcal{I}}$ is the partition of $Q_{n-1}$ in cubes of $\mathcal{F}_n$. Also, we can uniquely identify each $Q_n \in \mathcal{F}_n$ with a finite sequence $\{ \nu^1 ,\ldots, \nu^n  \} \subset \mathcal{I}$  and with the corresponding sequence of ancestors $Q_n \subset Q_{n-1} \subset \cdots\subset Q_1 \subset Q_0$ such that $Q_k \in \mathcal{F}_k$ and, for any given  $Q_{k-1}$,
then $Q_k = Q_k^{\nu^k}$ in the sense of \eqref{Q nu}.      

\

Now suppose that $\mathcal{P} = \{p_{\nu} \}_{\nu \in \mathcal{I}}$ is a probability distribution on $\mathcal{I}$ or, equivalently, a probability distribution on the $p^N$ subcubes of $Q_0$ in $\mathcal{F}_1$. We claim that $\mathcal{P}$ induces a Borel probability measure $\mu_{\mathcal{P}}$ on $Q_0$. We define first $\mu_{\mathcal{P}}$ on the family $\displaystyle \bigcup_n \mathcal{F}_n$ of all $p$-adic subcubes of $Q_0$. Assume that $\mu_{\mathcal{P}}(Q_{n-1})$ is already known, where $Q_{n-1} \in \mathcal{F}_{n-1}$. Then for each $\nu \in \mathcal{I}$ and each descendant $Q_{n}^{\nu} \subset Q_{n-1}$ with $Q_{n}^{\nu} \in \mathcal{F}_n$, set 
\begin{equation}\label{def mu-P}
\mu_{\mathcal{P}} (Q_n ^{\nu}) = p_{\nu} \, \mu_{\mathcal{P}} (Q_{n-1}).
\end{equation}
By Kolmogorov's extension theorem (\cite[Ch. II, \S 3, Thm. 4]{S}) the assignment \eqref{def mu-P} extends to a Borel probability measure on $Q_0$ that we still denote by $\mu_{\mathcal{P}}$. Hereafter we will refer to $\mu_{\mathcal{P}}$ as the \textit{Bernoulli product} associated to the (finite) probability distribution $\mathcal{P}$ on $\mathcal{I}$. Once $\mu_{\mathcal{P}}$ has been defined on $Q_0$ we extend it periodically to $\R^N$ by setting 
$\mu_\mathcal{P}(E+z)=\mu_\mathcal{P}(E)$ for $E\subset Q_0$ and $z\in\Z^N$. 
See \cite{H2} for a general discussion of Bernoulli products and \cite{H1,LN} for applications to Geometric Function Theory.   

\

Define the random variable $X_1$ in $Q_0$ by setting 
$$
X_1 (x) =  \sum_{\nu \in \mathcal{I}} \log p_{\nu} \, \mathbf{1}_{Q^{\nu}_1}(x).
$$
We introduce some more notation now. Let $\hat{T}: [-\frac{1}{2},\frac{1}{2}) \to [-\frac{1}{2},\frac{1}{2})$ be the shift transformation given by $\hat{T}t = pt+q -[p(t +\frac{1}{2})]$, where $[.]$ denotes integer part. Now, for $x= (x_1,\ldots,x_N) \in Q_0$ define $T: Q_0 \to Q_0$ by 
$$
Tx = (\hat{T}x_1,\ldots, \hat{T}x_N).
$$
Finally, we define the random variables $X_n (x) = X_1 ( T^{n-1}x)$ for all $n\in \N$. Observe that $\{ X_n \}$ is a sequence of independent (respect to $m_N$), identically distributed random variables with a common distribution given by 
$$
\mathbb{P} (X = s) = \sum_{\log p_{\nu} = s} p_{\nu}.
$$

It turns out that the expression of $\mu_{\mathcal{P}}$ takes a very simple form on $p$-adic subcubes of $Q_0$ when using the random variables $\{ X_n \}$. Indeed, by construction, 
\begin{equation}\label{Xn}
X_n (x) = \log \frac{\mu_{\mathcal{P}}(Q_n (x))}{\mu_{\mathcal{P}}(Q_{n-1} (x))},
\end{equation}
so that
\begin{equation} \label{exp}
\mu_{\mathcal{P}}(Q_n (x)) = e^{S_n (x)},
\end{equation}
where 
\begin{equation} \label{Sn}
S_n (x) = X_1 (x) + \ldots +X_n (x)
\end{equation}
for all $n\in \N$ and each $x\in Q_0$. 

\begin{example}\label{basicex}
To fix ideas, take $N=2$ and $p=3$ (this corresponds to the case when $Q_0 = [-\frac{1}{2},\frac{1}{2})^2$ is divided in $9$ triadic subsquares). The index set $\mathcal{I}$ consists of all the pairs $(i,j)$, with $i,j \in \{-1,0,1\}$,  labeling the $9$ triadic subsquares of $Q_0$ as in \eqref{Q nu}. Choose positive numbers $a_0$, $a_1$ and $a_2$ such that $4a_2 +4a_1 + a_0 =1 $. Let us call group $\bf{2}$ to the four subsquares on the corners (labeled by $(\pm 1, \pm1 ) $), group $\bf 1$ to the four subsquares labeled by $(0, \pm 1 )$, $(\pm 1 , 0)$ and, finally, group $\bf 0$ to the remaining central subsquare, labeled by $(0,0)$.  We now assign measure $a_2$ to each subsquare of group $ \bf 2$, measure $a_1$ to each subsquare of group $ \bf 1$ and measure $a_0$ to the subsquare in group $ \bf 0$ (see \Cref{fig}).
\begin{figure}[h]
\begin{tikzpicture}[scale=0.5]
 
\draw (0,0) rectangle (6,6);
\draw (0,2) -- (6,2);
\draw (0,4) -- (6,4);
\draw (2,0) -- (2,6);
\draw (4,0) -- (4,6);
\draw [scale=1](1,1) node [scale=1]{$a_2$};
\draw [scale=1](1,3) node [scale=1]{$a_1$};
\draw [scale=1](1,5) node [scale=1]{$a_2$};
\draw [scale=1](3,1) node [scale=1]{$a_1$};
\draw [scale=1](3,3) node [scale=1]{$a_0$};
\draw [scale=1](3,5) node [scale=1]{$a_1$};
\draw [scale=1](5,1) node [scale=1]{$a_2$};
\draw [scale=1](5,3) node [scale=1]{$a_1$};
\draw [scale=1](5,5) node [scale=1]{$a_2$};
 
\end{tikzpicture}
\caption{\label{fig}}
\end{figure}
This defines a particular probability distribution $\mathcal{P}$ on $\mathcal{I}$. Then keep this pattern and repeat it independently in each generation. The random variables $\{X_n \}$ given by \eqref{Xn} are independent and identically distributed (with respect to $m_2$)
and the measure $\mu_{\mathcal{P}}$ acting on triadic sub-squares can be written in the following way
$$
\mu_{\mathcal{P}} (Q_n (x)) = a_2 ^{ A_n^2 (x)} a_1 ^{A_n^1 (x)} a_0 ^{A_n^0 (x)},
$$
where $A_n^{j} (x)$  is the number of $k$'s ($k=1,\ldots,n$) such that $Q_k (x)$ belongs to group $\bf j$ ($\bf{j} = \bf{0},\bf{1},\bf{2}$). Observe that $S_n = X_1 +\cdots+X_n = (\log a_2 ) \, A_n^2  + (\log a_1 ) \, A_n^1 + (\log a_0 ) \, A_n^0 $ in this case.  
\end{example}

Coming back to the general case, we claim now that $\mu_{\mathcal{P}}$ is singular unless $\mathcal{P}$ is the uniform probability distribution on $\mathcal{I}$, in which case  $ \mu_{\mathcal{P}} = m_N$. To see this, remind that, by \eqref{exp} and \eqref{Sn} we have, for each $Q_n \in \mathcal{F}_n$, 
\begin{equation}\label{density}
\log \bigg ( \frac{\mu_{\mathcal{P}} (Q_n )}{m_N (Q_n )} \bigg ) = \log ( p^{nN} e^{S_n} ) = n\bigg ( N \log p  + \frac{S_n}{n} \bigg ).
\end{equation}
We invoke now the Law of Large Numbers for the random variables $\{ X_n \}$ and deduce that 
\begin{equation*}
\frac{S_n }{n} \to \mathbb{E}[X_1] = p^{-N}\sum_{\nu \in \mathcal{I}} \log p_{\nu} \qquad (m_N\mbox{-a.e.}),
\end{equation*}
so the expression inside brackets  at the right-hand side of \eqref{density} converges to 
\begin{equation}\label{concav}
N \log p + \sum_{\nu \in \mathcal{I}} \log p_{\nu} ,
\end{equation}
(remind that $\mathrm{card} (\mathcal{I}) = p^N$). Now we claim that \eqref{concav} is strictly negative unless all $p_{\nu}$'s are equal. This is a consequence of the fact that if $k\in \N$ and $x_1,\ldots,x_k$ are positive numbers such that $ \displaystyle \sum_{i=1}^k x_i = 1$, then $\displaystyle \sum_{i=1}^k \log x_i  \leq - k\log k
$, with equality if and only if all the $x_i$'s are equal. Therefore, unless all the $p_{\nu}$'s are equal,  
$$
\log \bigg ( \frac{\mu_{\mathcal{P}} (Q_n (x) )}{m_N (Q_n (x) )} \bigg ) \to -\infty
$$
for $m_N$-a.e. $x\in Q_0$. This shows that $\mu_{\mathcal{P}}$ is singular, with the only exception in which $\mathcal{P}$ is the uniform distribution and therefore $\mu_{\mathcal{P}} = m_N$.

\begin{remark}
An alternative to the argument above is based in the fact that the random variables $\{ X_n \}$ are also independent and identically distributed with respect to the probability $\mu_{\mathcal{P}}$ itself. In this case 
$$
\mathbb{E}_{\mu_{\mathcal{P}}}[X_1] = \sum_{\nu \in \mathcal{I}}p_{\nu} \, \log p_{\nu} = -h(\mathcal{P}),
$$
where $h(\mathcal{P})$ denotes the \textit{entropy} of the probability distribution $\mathcal{P}$. Then \eqref{density} holds in the same way and the Law of the Large Numbers gives now that 
$$
\frac{S_n}{n} \to -h(\mathcal{P})  \quad (\mu_{\mathcal{P}}\mbox{-a.e.})
$$
as $n\to \infty$, so
$$
N\log p + \frac{S_n}{n} \to N\log p - h(\mathcal{P}) \quad (\mu_{\mathcal{P}}\mbox{-a.e.}).
$$
Now it follows from the basic estimate for  the entropy that  $N\log p - h(\mathcal{P}) \geq 0 $ with equality if and only if all the $p_{\nu}$'s are equal. Therefore, unless $\mu_{\mathcal{P}} = m_N$, we have 
$$
\log \bigg ( \frac{\mu_{\mathcal{P}} (Q_n (x) )}{m_N (Q_n (x) )} \bigg ) \to +\infty
$$
for $\mu_{\mathcal{P}}$-a.e. $x\in Q_0$, hence $\mu_{\mathcal{P}}$ is singular. The measure $\mu_{\mathcal{P}}$ is an example of a \textit{unidimensional} measure, with dimension $ \alpha(\mathcal{P}) = (\log p)^{-1} h(\mathcal{P}) $. Observe that $0 < \alpha(\mathcal{P}) \leq N $ and $\alpha(\mathcal{P}) = N$ if and only if $\mu_{\mathcal{P}}$ coincides with $m_N$. (See \cite{H1,L,LN} for further details). 
\end{remark}

\begin{remark} It can be shown that he probabilities $\mu_{\mathcal{P}}$ are mutually singular.
\end{remark}

\begin{remark} We could have also invoked to the Ergodic Theorem applied to the shift transformation $T$  instead of the Law of the Large Numbers. 
\end{remark}

\subsection{A particular class of doubling Bernoulli products}\label{sec:2.2}

Let $Q_0=[-\frac{1}{2},\frac{1}{2})^N$, $p=3$, $\mathcal{I}=\{-1,0,1\}^N$ and $\mathcal{F}_n$ be the family of triadic subcubes of $Q_0$ of the $n$-th generation. For any $\nu=(\nu_1,\nu_2,\ldots,\nu_N)\in\mathcal{I}$, we define the \textit{length} of $\nu$ as the natural number
$$
		\abs{\nu}:\,=\sum_{j=1}^N\abs{\nu_j}\in\{0,1,2,\ldots,N\}.
$$
We will construct a probability distribution $\mathcal{P}= \{p_{\nu} \}_{\nu \in \mathcal{I}}$ on $\mathcal{I}$ assigning the same probability to vectors in $\mathcal{I}$ of the same length. That is, for each $\nu\in\mathcal{I}$,
\begin{equation}\label{p-nu=a-nu}
		p_\nu=a_{\abs{\nu}},
\end{equation}
where $a_0,a_1,\ldots,a_N\in(0,1)$ are fixed probabilities satisfying
$$
		\sum_{k=0}^N \mathrm{card}\{\nu\in\mathcal{I}\ : \ \abs{\nu}=k\}\cdot a_k=1.
$$
Each of the firsts factors in the sum above can be computed following a simple combinatorial argument, and thus the equation becomes
\begin{equation*}
		\sum_{k=0}^N 2^k\binom{N}{k}\cdot a_k=1.
\end{equation*}

Constructing the corresponding probability measure on $Q_0$ as in the previous subsection, we obtain a probability measure $\mu=\mu_\mathcal{P}$ on $Q_0$, which is singular unless $a_0=a_1=\ldots=a_N=3^{-N}$, in which case $\mu = m_N$. Observe in particular that $\mu$ is symmetric with respect to each coordinate hyperplane. This means that for every set $A\subset Q_0$ it holds $\mu(A)=\mu(\widetilde{A})$, where $\widetilde{A}$ is any reflection of $A$ with respect to any coordinate hyperplane.
\\

In what follows, we show that the measure $\mu$ defined in this subsection satisfies the doubling condition. This property will be one of the tools in the proof of the main result of this article. The key is the following quasi-symmetry property of $\mu$, which controls the ratio of  the measures of two contiguous cubes of the same generation.

\begin{lemma}\label{lemma}
Let $Q_n,\widetilde Q_n\in\mathcal{F}_n$ be any two cubes of the $n$-th generation sharing one of their $(N-1)$-dimensional faces. Then
\begin{equation}\label{Q1-Q2}
		\frac{\mu(Q_n)}{\mu(\widetilde Q_n)}\geq a_\mathrm{min},
\end{equation}
where $a_\mathrm{min}:\,=\mathrm{min}\{a_0,a_1,\ldots,a_N\}\in(0,1)$.
\end{lemma}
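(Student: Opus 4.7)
The plan is to proceed by induction on $n$, using the recursive identity $\mu(Q_n)=a_{|\nu^n|}\,\mu(P_{n-1})$ that follows from \eqref{def mu-P} and \eqref{p-nu=a-nu}, where $P_{n-1}\in\mathcal{F}_{n-1}$ is the parent cube containing $Q_n$ and $\nu^n\in\mathcal{I}$ is its descent label within $P_{n-1}$. The base case $n=1$ is immediate: two adjacent subcubes of $Q_0$ in $\mathcal{F}_1$ have labels $\nu$ and $\nu+e_j$ for some coordinate direction $j$, with $\nu_j\in\{-1,0\}$, so $|\nu+e_j|=|\nu|\pm 1$ and
\[
\frac{\mu(Q_1^\nu)}{\mu(Q_1^{\nu+e_j})}=\frac{a_{|\nu|}}{a_{|\nu+e_j|}}\geq a_\mathrm{min},
\]
since each $a_k$ lies in $(0,1)$.

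For the inductive step, assume the bound at level $n-1$ and pick $Q_n,\widetilde Q_n\in\mathcal{F}_n$ sharing an $(N-1)$-face; without loss of generality $\widetilde Q_n$ lies immediately to the right of $Q_n$ in the $j$-th coordinate. Let $P_{n-1},\widetilde P_{n-1}\in\mathcal{F}_{n-1}$ be the corresponding parent cubes. If $P_{n-1}=\widetilde P_{n-1}$ the descent labels are $\nu^n$ and $\nu^n+e_j$ (no carry occurs), and we conclude exactly as in the base case. If instead $P_{n-1}\neq\widetilde P_{n-1}$, then by the alignment of the triadic grid the two parents are themselves adjacent cubes in $\mathcal{F}_{n-1}$, and $Q_n$ must occupy the rightmost triadic slab of $P_{n-1}$ while $\widetilde Q_n$ occupies the leftmost slab of $\widetilde P_{n-1}$; thus $\nu^n_j=1$ and $\tilde\nu^n_j=-1$, whereas $\nu^n_i=\tilde\nu^n_i$ for every $i\neq j$. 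Consequently $|\nu^n|=|\tilde\nu^n|$, and
\[
\frac{\mu(Q_n)}{\mu(\widetilde Q_n)}=\frac{a_{|\nu^n|}\,\mu(P_{n-1})}{a_{|\tilde\nu^n|}\,\mu(\widetilde P_{n-1})}=\frac{\mu(P_{n-1})}{\mu(\widetilde P_{n-1})}\geq a_\mathrm{min}
\]
by the inductive hypothesis.

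The crux of the argument---and the only place where the ansatz $p_\nu=a_{|\nu|}$ from \eqref{p-nu=a-nu} is actually used---is the observation in the different-parents case that crossing a parent interface in the $j$-th direction forces $\nu^n_j$ and $\tilde\nu^n_j$ to be $+1$ and $-1$, whose absolute values coincide; the length-dependence of $p_\nu$ then makes the corresponding factors in the product cancel, and the problem reduces to a single ratio of the form $a_l/a_{l\pm 1}\geq a_\mathrm{min}$. I do not foresee any genuine obstacle beyond carefully verifying this alignment together with the fact that $P_{n-1}$ and $\widetilde P_{n-1}$ must themselves be neighbors, both of which are direct consequences of the regularity of the triadic partition.
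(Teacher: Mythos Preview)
Your argument is correct and follows essentially the same route as the paper: the same case split (common parent versus distinct parents), the same key observation that across a parent interface one has $\nu^n_j=-\tilde\nu^n_j$ and hence $|\nu^n|=|\tilde\nu^n|$, and the same reduction to a single ratio $a_l/a_{l'}\geq a_{\min}$. The only cosmetic difference is that you phrase the reduction as induction on $n$, whereas the paper iterates the equality $\mu(Q_k)/\mu(\widetilde Q_k)=\mu(Q_{k-1})/\mu(\widetilde Q_{k-1})$ down to the nearest common ancestor; the paper also briefly disposes of the boundary case where $\widetilde Q_n$ falls outside $Q_0$ (handled by periodicity and symmetry), which you may wish to mention for completeness when the lemma is applied to cubes near $\partial Q_0$.
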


\begin{proof}
Without loss of generality, we can assume that both $Q_n$ and $\widetilde Q_n$ are contained in $Q_0$. Otherwise, if $Q_n\subset Q_0$ and $\widetilde Q_n\not\subset Q_0$, since $\mu_\mathcal{P}$ is extended from $Q_0$ to $\R^n$ periodically, and due to the symmetry of the measure with respect to the coordinate axis, both $Q_n$ and $\widetilde Q_n$ have the same measure and \eqref{Q1-Q2} follows immediately.

Since $Q_n,\widetilde Q_n\in\mathcal{F}_n$, recalling notation \eqref{Q nu}, there exist $Q_{n-1},\widetilde Q_{n-1}\in\mathcal{F}_{n-1}$ and $\nu,\widetilde\nu\in\mathcal{I}$ such that
$$
		Q_n=Q_n^\nu\subset Q_{n-1} \quad \quad \mbox{ and } \quad \quad \widetilde Q_n=\widetilde Q_n^{\widetilde\nu}\subset \widetilde Q_{n-1}.
$$
Then, by \eqref{def mu-P}, we have
\begin{equation}\label{measure Q1/Q2}
		\frac{\mu(Q_n)}{\mu(\widetilde Q_n)}=\frac{a_{\abs{\nu}}}{a_{\abs{\widetilde\nu}}}\cdot\frac{\mu(Q_{n-1})}{\mu(\widetilde Q_{n-1})}.
\end{equation}
Then, either $Q_{n-1}=\widetilde Q_{n-1}$ or $Q_{n-1}\neq \widetilde Q_{n-1}$. In the first case, this means that $\abs{\nu}\neq\abs{\widetilde\nu}$, and \eqref{measure Q1/Q2} yields
\begin{equation}\label{case 1}
		\frac{\mu(Q_n)}{\mu(\widetilde Q_n)}=\frac{a_{\abs{\nu}}}{a_{\abs{\widetilde\nu}}}\geq a_\mathrm{min},
\end{equation}
and thus the proof is finished in this case. On the other hand, if $Q_{n-1}\neq \widetilde Q_{n-1}$, since $Q_n$ and $\widetilde Q_n$ are contiguous, then $Q_{n-1}$ and $\widetilde Q_{n-1}$ are also contiguous and there exists $j_0\in\{1,2,\ldots,N\}$ such that $\nu_j=\widetilde\nu_j$ for all $j\neq j_0$ and $\nu_{j_0}=-\widetilde\nu_{j_0}$. In particular, $\abs{\nu}=\abs{\widetilde\nu}$ and \eqref{measure Q1/Q2} becomes
\begin{equation}\label{case 2}
		\frac{\mu(Q_n)}{\mu(\widetilde Q_n)}=\frac{\mu(Q_{n-1})}{\mu(\widetilde Q_{n-1})}.
\end{equation}
Now, let $m\leq n-1$ be the generation of the nearest common ancestor of $Q_n$ and $\widetilde Q_n$. That is, the largest $m\in\N$ for which there exists $Q_m\in\mathcal{F}_m$ such that $Q_n\cup \widetilde Q_n\subset Q_m$. Then, we can construct the following sequences of nested contiguous cubes,
\begin{equation*}
\begin{split}
		& Q_m\supset Q_{m+1}\supset\cdots\supset Q_{n-1}\supset Q_n, \\
        & Q_m\supset \widetilde Q_{m+1}\supset\cdots\supset \widetilde Q_{n-1}\supset \widetilde Q_n,
\end{split}
\end{equation*}
and repeat \eqref{case 2} $n-m-1$ times to get
$$
		\frac{\mu(Q_n)}{\mu(\widetilde Q_n)}=\frac{\mu(Q_{m+1})}{\mu(\widetilde Q_{m+1})}.
$$
Since $Q_{m+1}\cup \widetilde Q_{m+1}\subset Q_m$, we apply \eqref{case 1} and we get \eqref{Q1-Q2}.
\end{proof}

\begin{prop}\label{result-doubling}
The measure $\mu$ is doubling. That is, there exists a constant $D=D(N,a_\mathrm{min})>0$ such that
\begin{equation}\label{doubling}
		\mu(Q(x,2r))\leq D\, \mu(Q(x,r))
\end{equation}
for each $x\in\R^n$ and $r>0$.
\end{prop}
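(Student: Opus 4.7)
The plan is to compare $\mu(Q(x,2r))$ and $\mu(Q(x,r))$ through a covering by triadic cubes at a scale adapted to $r$, controlling their individual measures via iterated application of Lemma~\ref{lemma}. Given $r>0$, I would first choose the unique integer $n$ (possibly negative) with $3^{-n-1}<r\leq 3^{-n}$, so that triadic cubes of generation $n+1$ have side $3^{-n-1}$ comparable to $r$.

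The first observation is geometric: because $2r>2\cdot 3^{-n-1}$ is strictly larger than twice the side of a level-$(n+1)$ cube, the cube $Q(x,r)$ always contains at least one whole $Q^{*}\in\mathcal{F}_{n+1}$; simultaneously, $Q(x,2r)$ has side $4r\leq 4\cdot 3^{-n}=12\cdot 3^{-n-1}$, hence intersects at most $13^{N}$ cubes of $\mathcal{F}_{n+1}$. Call this covering family $\mathcal{G}$, with $Q^{*}\in\mathcal{G}$ and $\mathrm{card}(\mathcal{G})\leq 13^{N}$.

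The heart of the argument is to show that every $Q'\in\mathcal{G}$ satisfies $\mu(Q')\leq a_{\min}^{-M}\mu(Q^{*})$ for some $M=M(N)$. Since the cubes of $\mathcal{G}$ all lie in the slightly enlarged cube $Q(x,2r+3^{-n-1})$, any two of them can be joined by a chain of at most $M(N)$ pairwise contiguous level-$(n+1)$ cubes staying inside this slab (a routine axis-by-axis walk in dimension $N$). Applying Lemma~\ref{lemma} along this chain, we obtain $\mu(Q')/\mu(Q^{*})\leq a_{\min}^{-M(N)}$. Summing,
$$
\mu(Q(x,2r))\;\leq\;\sum_{Q'\in\mathcal{G}}\mu(Q')\;\leq\;13^{N}a_{\min}^{-M(N)}\,\mu(Q^{*})\;\leq\;13^{N}a_{\min}^{-M(N)}\,\mu(Q(x,r)),
$$
which gives \eqref{doubling} with $D=D(N,a_{\min})=13^{N}a_{\min}^{-M(N)}$. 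For $r$ large enough that $n\leq 0$, the same scheme works if one interprets $\mathcal{F}_{n+1}$ as unit cubes or unions of copies of $Q_{0}$; by the $\Z^{N}$-periodic extension, every such cube has $\mu$-mass equal to $3^{-nN}$, and the estimate reduces to the trivial volume comparison.

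The main obstacle is the combinatorial step verifying that the graph of contiguous cubes inside the cover $\mathcal{G}$ is connected with diameter bounded by a constant $M(N)$ depending only on the dimension; here it is essential that Lemma~\ref{lemma} is applicable across boundaries of $Q_{0}$ (the lemma already handles this via the reflection symmetry of $\mu$), so the chain of contiguous cubes need not be confined to a single fundamental domain. Once this combinatorial bound is established, the rest is a direct summation.
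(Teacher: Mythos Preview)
Your proposal is correct and follows essentially the same strategy as the paper: cover $Q(x,2r)$ by triadic cubes at a scale comparable to $r$, use Lemma~\ref{lemma} along a bounded chain of contiguous cubes to compare their masses, and exhibit a triadic cube inside $Q(x,r)$. The only difference is the choice of scale: the paper picks the generation $n$ with $4r\le 3^{-n}<12r$, so that $Q(x,2r)$ is covered by at most $2^{N}$ cubes of $\mathcal{F}_{n}$, and then observes $Q_{n+3}(x)\subset Q(x,r)$ with $\mu(Q_{n+3}(x))\ge a_{\min}^{3}\mu(Q_{n}(x))$; this yields the explicit constant $D=2^{N}a_{\min}^{-N-3}$. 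Your finer scale trades a larger cover ($13^{N}$ cubes) and an unspecified chain length $M(N)$ for the convenience of having a whole triadic cube inside $Q(x,r)$ directly. Both versions handle large $r$ by periodicity in the same way.
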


\begin{proof}
If $r\gtrsim 1$, then it follows from the periodicity of the construction of $\mu$ that $\mu(Q(x,r))\simeq r^N$. This implies the doubling condition for large $r$. For small $r$, we can assume also from periodicity and symmetry that $Q(x,2r)\subset Q_0$. Therefore, we can start by assuming that $0<r\leq\frac{1}{12}$ and fix $n=\max\{m\in\N \, : \, 4r\leq 3^{-m}\}$. Then
$$
		2r\leq\frac{1}{2\cdot 3^n}
$$
and $Q(x,2r)$ is contained in the union of (at most) $2^N$ cubes of generation $n$ sharing at least one vertex with $Q_n(x)$. Let $Q_n$ be one of such cubes. Then, from \Cref{lemma} and an elementary chain argument it follows that $\mu(Q_n)\leq a_\mathrm{min}^{-N}\,\mu(Q_n(x))$. Therefore, we obtain
$$
		\mu(Q(x,2r))\leq \frac{2^N}{a_\mathrm{min}^N}\,\mu(Q_n(x)).
$$
On the other hand, from the choice of $n\in\N$, it follows also that,
$$
		\frac{1}{3^{n+3}}<r,
$$
and thus $Q_{n+3}(x)\subset Q(x,r)$. Then, since $Q_{n+3}(x)\subset Q_n(x)$ and from the self-similarity of the construction of $\mu$, we get
$$
		\mu(Q(x,r))\geq\mu(Q_{n+3}(x))\geq a_\mathrm{min}^3\,\mu(Q_n(x)).
$$
Hence, \eqref{doubling} follows with $D=2^N a_\mathrm{min}^{-N-3}$.
\end{proof}

In order to ensure that the corresponding measure $\mu$ satisfies the $1$-ADC, we need to impose first some conditions on the coefficients $a_k$. For that reason, let us split $Q_0$ into three sets $\Gamma^{-1}$, $\Gamma^0$ and $\Gamma^1$ as follows:
$$
		\Gamma^i=\bigcup\left\{Q_1^\nu \ : \ \nu_1=i\right\},
$$
for $i\in\{-1,0,1\}$. Note that, by the symmetry of $\mu$, $\Gamma^{-1}$ and $\Gamma^1$ have the same measure, then $2\mu(\Gamma^1)+\mu(\Gamma^0)=1$. In addition, we impose that the measure of each $\Gamma^i$ is exactly equal to $1/3$, that is,
\begin{equation}\label{condition-mu-P}
		\mu(\Gamma^1)=\frac{1}{3} \quad \quad \mbox{ and } \quad \quad \mu(\Gamma^0)=\frac{1}{3}.
\end{equation}
Since the probability distribution of the cubes in $\mathcal{F}_1$ coincides with the probability distribution $\mathcal{P}=\{p_\nu=a_{\abs{\nu}}\}_{\nu\in\mathcal{I}}$, the above conditions can be expressed in terms of the coefficients $a_k$:
\begin{equation*}
\begin{split}
		\mu(\Gamma^i)
        = ~& \sum_{\nu\in\mathcal{I} \, : \, \nu_1=i}\mu(Q_1^\nu) 
       = \mu(Q_0)\sum_{\nu\in\mathcal{I} \, : \, \nu_1=i} p_{\nu} \\
        = ~& \sum_{k=0}^N \mathrm{card}\{\nu\in\mathcal{I}\ : \ \nu_1=i \ \mbox{ and }\ \abs{\nu}=k\}\cdot a_k,
\end{split}
\end{equation*}
where \eqref{def mu-P} and \eqref{p-nu=a-nu} have been used in the second and the third equalities, respectively. Then, by some elementary combinatorics,  \eqref{condition-mu-P} is equivalent to 
\begin{equation}\label{eqs}
		\begin{cases}
		\displaystyle\ \sum_{k=0}^{N-1} 2^k\binom{N-1}{k}\cdot a_k=\frac{1}{3} & \\ & \\
        \displaystyle\ \sum_{k=0}^{N-1} 2^k\binom{N-1}{k}\cdot a_{k+1}=\frac{1}{3} &
        \end{cases}
\end{equation}

\ 

Let $S\subset\R^{N+1}$ be the set of all possible choices of the vector  $(a_0, a_1 ,\ldots,a_N ) \in (0,1)^{N+1}$ satisfying \eqref{eqs}, that is
$$
		S=\{(a_0,a_1,\ldots,a_N) : \mbox{\eqref{eqs} holds and } 0<a_k<1 \mbox{ for } k=0,1,2,\ldots,N\}.
$$
This set is nonempty since $(3^{-N},\ldots,3^{-N})\in S$. Note also  that the equations in \eqref{eqs} define two $N$-dimensional hyperplanes in $\R^{N+1}$ and they both contain $S\neq\emptyset$, then its intersection is an $(N-1)$-dimensional affine subspace in $\R^{N+1}$. Let us call it $\pi$. Moreover, since $S$ is the intersection in $\pi$ of the open sets $\{0<a_k<1\}$ for $k=0,1,2,\ldots,N$, then $S$ is an open set with respect to the topology induced in $\pi$. Therefore, there exists different choices of the coefficients in $S$ apart from $(3^{-N},\ldots,3^{-N})$. Thus, the construction provides examples of doubling, singular Bernoulli products satisfying \eqref{eqs}.

\begin{example}
For $N=2$, the equations in \eqref{eqs} define the line
$$
		\left\{\left(\frac{1}{9}-4t,\frac{1}{9}+2t,\frac{1}{9}-t\right)\ : \ t\in\R\right\},
$$
and since $0<a_k<1$ for $k=0,1,2$, then
$$
		\left.\begin{array}{l}
        a_0=\dfrac{1}{9}-4\varepsilon \\ \\
        a_1=\dfrac{1}{9}+2\varepsilon \\ \\
        a_2=\dfrac{1}{9}-\varepsilon
        \end{array}\right\}\quad \mbox{ for } -\frac{1}{18}<\varepsilon<\frac{1}{36}.
$$
\end{example}

\section{Proof of the Theorem}\label{sec:3}

In this section, we show that, given $a_0,a_1,\ldots,a_N\in(0,1)$ such that \eqref{eqs} holds, the corresponding measure $\mu=\mu_\mathcal{P}$ satisfies the $1$-ADC with respect to $d_\infty$, i.e.,
$$
		\frac{\mu(Q(x,R)\setminus Q(x,r))}{\mu(Q(x,R))} \lesssim \frac{R-r}{R},
$$
for $0<r<R$. Here, the notation $a\lesssim b$ means that there exists a constant $C>0$ such that $a\leq Cb$. In addition, we write $a\simeq b$ if $a\lesssim b$ and $b\lesssim a$.
\\

We begin introducing some notation. For $x=(x_1,x_2,\ldots,x_N)\in\R^N$ and  $r>0$ we write 
$$
		Q(x,r)= B_{\infty} (x,r) = \prod_{j=1}^N[x_j-r,x_j+r).
$$
In what follows, we say that a set $\Gamma\subset Q(x,r)$ is a \textit{coordinate strip of parameters $2r$ and $t-s>0$} if there exists $m\in\{1,2,\ldots,N\}$ such that
\begin{equation*}
		\Gamma=\prod_{j=1}^{m-1}[x_j-r,x_j+r)\times[s,t)\times\prod_{j=m+1}^N[x_j-r,x_j+r)
\end{equation*}
for some $x_m-r\leq s<t \leq x_m+r$. Moreover, we say that $\Gamma \subset Q_0$ is a \textit{triadic strip of generation $n\in\N$} if it is a coordinate strip of parameters $1$ and $3^{-n}$  and can be expressed as the union of cubes in $\mathcal{F}_{n}$. That is, if there exists $m\in\{1,2,\ldots,N\}$ such that
\begin{equation*}
		\Gamma=\left[-\frac{1}{2},\frac{1}{2}\right)^{m-1}\times\left[-\frac{1}{2}+\frac{k-1}{3^n},-\frac{1}{2}+\frac{k}{3^n}\right)\times\left[-\frac{1}{2},\frac{1}{2}\right)^{N-m},
\end{equation*}
for some $k\in\{1,2,\ldots,3^n\}$.
\\

The proof relies on estimating for the measure of the annular set $Q(x,R)\setminus Q(x,r)$, which can be seen as the union of $2N$ coordinate strips of parameters $2R$ and $R-r$. For that reason, we focus on giving an estimate for the measure of each of these strips. This is stated in the following lemma, which is the main result of this section.

\begin{lemma}\label{key lemma}
Let $\Gamma \subset Q(x,R)$ be a coordinate strip of parameters $2R$ and $h\in (0,2R)$. Then there is a constant $C=C(N,a_\mathrm{min})$ such that
$$
		\frac{\mu(\Gamma)}{\mu(Q(x,R))}\leq C\,\frac{h}{R}.
$$
\end{lemma}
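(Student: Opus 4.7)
The plan is to combine the self-similarity of $\mu$, the balance condition \eqref{condition-mu-P}, and \Cref{lemma} in a covering argument at scale $R$; for concreteness I treat the case $2R<1$ and comment on large $R$ at the end.

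\emph{Step 1 (exact identity on $p$-adic cubes).} I would first establish that for every $Q$ in the periodic extension of $\mathcal{F}_n$ (of side $\ell=3^{-n}$) and every coordinate strip $\Gamma'\subset Q$ of width $h'\leq\ell$,
$$
\mu(\Gamma')=\frac{h'}{\ell}\,\mu(Q).
$$
Because $p_\nu=a_{\abs{\nu}}$ is invariant under permutations of coordinates, the two equations in \eqref{condition-mu-P} hold in every direction $m$, not only $m=1$. Combined with \eqref{def mu-P}, this tells us that inside any $Q_{n-1}\in\mathcal{F}_{n-1}$ the three direction-$m$ sub-strips of generation $n$ each carry exactly $\mu(Q_{n-1})/3$; iterating, every triadic strip of generation $n$ in $Q_0$ has $\mu$-measure $3^{-n}$. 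Hence $\pi_m(\mu|_{Q_0})$ and Lebesgue measure on $[-\tfrac{1}{2},\tfrac{1}{2})$ agree on all triadic intervals, and so coincide by a monotone class argument. The identity on a general $Q$ then follows from self-similarity and periodicity.

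\emph{Step 2 (covering at the right scale).} For $Q(x,R)$ with $2R<1$, pick $n\geq 1$ with $3^{-n}\leq 2R<3^{-(n-1)}$. In each coordinate $[x_j-R,x_j+R)$ has length less than $3^{-(n-1)}$, so it meets at most $2$ cubes of the periodic extension of $\mathcal{F}_{n-1}$; thus $Q(x,R)$ is covered by $L\leq 2^N$ such cubes $Q_1,\dots,Q_L$. Since $\Gamma\cap Q_l$ sits inside a coordinate strip of $Q_l$ of width at most $h$, Step 1 gives
$$
\mu(\Gamma\cap Q_l)\leq\frac{h}{3^{-(n-1)}}\,\mu(Q_l)\leq\frac{h}{2R}\,\mu(Q_l),
$$
so $\mu(\Gamma)\leq\tfrac{h}{2R}\sum_{l=1}^L\mu(Q_l)$. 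The problem reduces to $\sum_l\mu(Q_l)\lesssim\mu(Q(x,R))$.

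\emph{Step 3 (comparing the $Q_l$ with $Q(x,R)$).} This is where \Cref{lemma} does the decisive work, and I expect it to be the main technical step. Since $2R\geq 3^{-n}=3\cdot 3^{-(n+1)}$, $Q(x,R)$ contains at least one cube $Q^\ast$ of the periodic extension of $\mathcal{F}_{n+1}$. Choose any sub-cube $Q_l^\dagger\subset Q_l$ at the same level $n+1$; then \eqref{def mu-P} gives $\mu(Q_l)\leq a_\mathrm{min}^{-2}\,\mu(Q_l^\dagger)$. Both $Q_l^\dagger$ and $Q^\ast$ lie in $\bigcup_l Q_l$, whose $d_\infty$-diameter is at most $2\cdot 3^{-(n-1)}=18\cdot 3^{-(n+1)}$; in the contiguity graph of level-$(n+1)$ cubes they can therefore be connected by a chain of length bounded purely in terms of $N$. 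Iterating \Cref{lemma} along this chain yields $\mu(Q_l^\dagger)\leq C(N,a_\mathrm{min})\,\mu(Q^\ast)\leq C(N,a_\mathrm{min})\,\mu(Q(x,R))$, and summing over $l\leq 2^N$ closes the estimate. The remaining case $2R\geq 1$ reduces to a direct count using periodicity, which gives $\mu(Q(x,R))\simeq R^N$ and $\mu(\Gamma)\lesssim h\,R^{N-1}$.
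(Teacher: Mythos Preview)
Your proposal is correct and follows essentially the same route as the paper. Step~1 is precisely the paper's \Cref{small strip} (stated via pushforwards rather than via a triadic decomposition, but with identical content), and Steps~2--3 reproduce the paper's covering argument at scale $R$. The only cosmetic difference is that in Step~3 you rebuild the comparison $\sum_l\mu(Q_l)\lesssim\mu(Q(x,R))$ directly from \Cref{lemma} via a chain of contiguous cubes, whereas the paper invokes the doubling property (\Cref{result-doubling}) --- which is itself proved by exactly that chain argument --- to get $\mu(Q(x,R))\simeq\mu(Q_n)$ for each $Q_n\in\mathcal{Q}$.
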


\

Applying this result to each coordinate strip $\Gamma\subset Q(x,R)\setminus Q(x,r)$ of parameters $2R$ and $R-r$, we get
$$
		\frac{\mu(Q(x,R)\setminus Q(x,r))}{\mu(Q(x,R))}\leq 2^NC\ \frac{R-r}{R},
$$
which proves the Theorem.
\\

In order to prove \Cref{key lemma}, we first need to control the measure of \textit{small} coordinate strips contained in $Q_0$. The following result is a particular version of \Cref{key lemma} that covers this case.

\begin{lemma}\label{small strip}
Let $n\geq 0$ and $Q_n\in\mathcal{F}_n$. Suppose that $\Gamma\subset Q_n$ is a coordinate strip of parameters $3^{-n}$ and $h\in(0,3^{-n})$. Then
$$
		\frac{\mu(\Gamma)}{\mu(Q_n)}=3^nh.
$$
\end{lemma}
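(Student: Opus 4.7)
The plan is to exploit the self-similarity of $\mu$ to reduce the statement to the case $n=0$, then show that the first-coordinate marginal of $\mu|_{Q_0}$ is the Lebesgue measure on $[-\tfrac12,\tfrac12)$ by verifying the correct mass on every triadic interval using \eqref{condition-mu-P} together with the Bernoulli product structure, and finally extending to arbitrary endpoints by monotonicity.

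For the self-similarity step I fix $Q_n\in\mathcal{F}_n$ with index sequence $\nu^1,\ldots,\nu^n$, and let $\phi_n\colon Q_0\to Q_n$ be the natural affine bijection sending $\mathcal{F}_k$ onto the level-$(n+k)$ descendants of $Q_n$. Iterating \eqref{def mu-P} gives $\mu(\phi_n(Q_k^\eta)) = \mu(Q_n)\prod_{j=1}^k a_{\abs{\eta^j}} = \mu(Q_n)\,\mu(Q_k^\eta)$ for every $Q_k^\eta\in\mathcal{F}_k$, and Kolmogorov's uniqueness extends this to $\mu(\phi_n(E)) = \mu(Q_n)\,\mu(E)$ for every Borel $E\subset Q_0$. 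Since $\phi_n$ sends a coordinate strip in $Q_0$ of parameters $1$ and $h'$ to one in $Q_n$ of parameters $3^{-n}$ and $3^{-n}h'$, the lemma reduces to the following $n=0$ statement: for every coordinate strip $\Gamma\subset Q_0$ of parameters $1$ and $h'\in(0,1)$, $\mu(\Gamma) = h'$.

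Because $p_\nu = a_{\abs{\nu}}$ is invariant under permutations of the coordinates, so is $\mu$; hence I may assume the distinguished direction is $m=1$, i.e.\ $\Gamma = [s,t)\times[-\tfrac12,\tfrac12)^{N-1}$ with $t-s=h'$. Put $F(r) = \mu([-\tfrac12,r)\times[-\tfrac12,\tfrac12)^{N-1})$ on $[-\tfrac12,\tfrac12]$; the goal becomes $F(r) = r+\tfrac12$. For a triadic rational $r = -\tfrac12 + k/3^\ell$ with $k\in\{0,1,\ldots,3^\ell\}$, the strip $[-\tfrac12,r)\times[-\tfrac12,\tfrac12)^{N-1}$ decomposes as the disjoint union of $k$ strips $I\times[-\tfrac12,\tfrac12)^{N-1}$ over the level-$\ell$ triadic subintervals $I\subset[-\tfrac12,r)$, and each of these is in turn the disjoint union of all $Q_\ell\in\mathcal{F}_\ell$ whose first-coordinate indices $(\nu^1_1,\ldots,\nu^\ell_1)$ form the unique sequence determined by $I$, the remaining indices $\nu^j_m$ (for $m\geq 2$) ranging freely over $\{-1,0,1\}$. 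Summing $\mu(Q_\ell) = \prod_{j=1}^\ell a_{\abs{\nu^j}}$ over the free indices factorizes across levels, yielding
\begin{equation*}
\mu\bigl(I\times[-\tfrac12,\tfrac12)^{N-1}\bigr) = \prod_{j=1}^\ell\sum_{\eta\in\mathcal{I}:\,\eta_1 = \nu^j_1}a_{\abs{\eta}} = \prod_{j=1}^\ell \mu(\Gamma^{\nu^j_1}) = 3^{-\ell}
\end{equation*}
by \eqref{condition-mu-P}. Adding the $k$ contributions gives $F(-\tfrac12+k/3^\ell) = k/3^\ell$.

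Finally, $F$ is non-decreasing and agrees with $r\mapsto r+\tfrac12$ on the dense set of triadic rationals in $[-\tfrac12,\tfrac12]$, so sandwiching any $r$ between its triadic floor and ceiling at scale $3^{-\ell}$ and letting $\ell\to\infty$ forces $F(r) = r+\tfrac12$ everywhere. Hence $\mu(\Gamma) = F(t)-F(s) = h'$, and the lemma follows after unwinding the self-similarity reduction. The main technical point is the factorization displayed above: it depends decisively on the design of condition \eqref{condition-mu-P}, which ensures that conditioning the single-level distribution on a prescribed first coordinate leaves exactly mass $\tfrac13$, and on the Bernoulli independence across levels which then multiplies this to $3^{-\ell}$.
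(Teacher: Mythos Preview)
Your proof is correct and follows the same overall strategy as the paper: reduce via self-similarity to coordinate strips in $Q_0$, show that triadic strips of generation $\ell$ have mass $3^{-\ell}$ using condition \eqref{condition-mu-P}, and then extend to arbitrary width $h$. The presentational details differ in two places. First, the paper proves $\mu(\Gamma_\ell)=3^{-\ell}$ by induction on $\ell$ (using self-similarity inside each $Q_{\ell-1}\subset\Gamma_{\ell-1}$ to get $\mu(\Gamma_\ell)=\tfrac13\,\mu(\Gamma_{\ell-1})$), whereas you compute it in one shot by factorizing the sum over the free indices across levels; your version makes the role of Bernoulli independence more explicit. Second, to pass from triadic to general $h$, the paper writes $\Gamma$ as a disjoint union of triadic strips and sums, while you work with the monotone distribution function $F$ and a density/sandwiching argument; your route is a bit more careful here, since for a generic $h$ the disjoint-union decomposition is genuinely countable and implicitly relies on continuity of measure. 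Either way, the substance is identical and hinges on \eqref{condition-mu-P}.
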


\begin{proof}
Let us first assume that $\Gamma=\Gamma_n\subset Q_0$ is a triadic strip of generation $n\in\N$. We show that
\begin{equation}\label{lemma triadic strips}
		\mu(\Gamma_n)=\frac{1}{3^n}.
\end{equation}
For $n=1$, this follows directly from condition \eqref{condition-mu-P}. For $n\geq 2$, let $\Gamma_{n-1}\subset Q_0$ be the unique triadic strip of generation $n-1$ that contains $\Gamma_n$. Then
$$
		\mu(\Gamma_n)=\frac{1}{3}\,\mu(\Gamma_{n-1}).
$$
Indeed, since $\Gamma_{n-1}$ is a triadic strip of generation $n-1$, for every $Q_{n-1}\in\mathcal{F}_{n-1}$ such that $Q_{n-1}\subset\Gamma_{n-1}$, then $\mu(\Gamma_n\cap Q_{n-1})=\frac{1}{3}\,\mu(Q_{n-1})$ by self-similarity and thus
$$
		\mu(\Gamma_n)=\sum_{Q_{n-1}\subset\Gamma_{n-1}}\mu(\Gamma_n\cap Q_{n-1})=\frac{1}{3}\sum_{Q_{n-1}\subset\Gamma_{n-1}}\mu(Q_{n-1})=\frac{1}{3}\,\mu(\Gamma_{n-1}).
$$
Therefore, \eqref{lemma triadic strips} follows by induction on $n$.

Now, suppose that $\Gamma\subset Q_0$ is a coordinate strip of parameters $1$ and $h\in (0,1)$ and write $\Gamma$ as the disjoint union of triadic strips. Then, applying \eqref{lemma triadic strips} to each triadic strip in the decomposition and summing their measures, it follows that
\begin{equation}\label{strip lemma}
		\mu(\Gamma)=h.
\end{equation}

Finally, suppose that $\Gamma \subset Q_n$ is a coordinate strip of parameters $3^{-n}$ and $h\in(0,3^{-n})$. By the self-similarity of the construction of the measure, we have
$$
		\mu(\Gamma)
        =\mu(Q_n)\,\mu(3^n\Gamma).
$$
Note that $3^n\Gamma$ is a coordinate strip of parameters $1$ and $3^nh$ in some cube of length $1$ in $\Z^N+Q_0$. Thus, since $\mu$ is extended periodically outside $Q_0$ to the whole $\R^N$, we get from \eqref{strip lemma} that $\mu(3^n\Gamma)=3^n h$.
\end{proof}

\begin{proof}[Proof of \Cref{key lemma}]
Assume first that $R\gtrsim 1$, say $R\simeq m$, for some $m\in\N$. Assume also that $h\leq 1$ (the case $h\geq 1$ can be treated in a similar way). Then $\Gamma$ can be seen as the union of $\sim\! m^{N-1}$ coordinate strips $\Gamma_j$ of parameters $1$ and $h$. Then, by \Cref{small strip},
$$
		\mu(\Gamma) \simeq \sum_j\mu(\Gamma_j) \lesssim m^{N-1}h \simeq \frac{h}{R}\, R^N \simeq \frac{h}{R}\,\mu(Q(x,R)).
$$
Now, assume that $R\lesssim 1$. By the periodicity of $\mu$, we can also assume that $\Gamma\subset Q_0$. In particular, suppose that
\begin{equation}\label{R approx 3^n}
		\frac{1}{2\cdot 3^{n+1}}< R\leq\frac{1}{2\cdot 3^n}
\end{equation}
holds for some $n\in\N$. Consider $\mathcal{Q}\subset\mathcal{F}_n$ the family of cubes $Q_n$ such that $Q_n\cap Q(x,R)\neq\emptyset$. Then, $\mathcal{Q}$ has at most $2^N$ elements and, in particular, $Q_n(x)\in\mathcal{Q}$. By the doubling property \eqref{doubling}, we have that
\begin{equation}\label{comparison of cubes}
		\mu(Q(x,R))\simeq\mu(Q_n)
\end{equation}
for every $Q_n\in\mathcal{Q}$. Now, let $\Gamma\subset Q(x,R)$ be a coordinate strip of parameters $2R$ and $R-r$. Since $Q(x,R)$ is contained in the union of all cubes $Q_n$ in $\mathcal{Q}$, we have in particular that
$$
		\Gamma\subset\bigcup_{Q_n\in\mathcal{Q}}\Gamma_{Q_n},
$$
where each $\Gamma_{Q_n}\subset Q_n$ is a coordinate strip parameters $3^{-n}$ and $R-r$. Therefore, again by \Cref{small strip}, we obtain
$$
		\mu(\Gamma)\leq\sum_{Q_n\in\mathcal{Q}}3^n(R-r)\mu(Q_n).
$$
Recalling \eqref{R approx 3^n} and \eqref{comparison of cubes} we get
$$
		\frac{\mu(\Gamma)}{\mu(Q(x,R))}\lesssim 2^N\ \frac{R-r}{R},
$$
and the proof is finished.
\end{proof}

\section{Further results, questions  and comments}\label{sec:4}

Take $N=2$ and let $\mu$ be the measure introduced in \Cref{basicex}, corresponding to the choices  $a_2 = \frac{1}{9} - \varepsilon$, $a_1 = \frac{1}{9} + 2\varepsilon$ and $a_0 = \frac{1}{9} - 4\varepsilon$ for some $0 < \varepsilon < \frac{1}{36}$. Then, as has been established in \Cref{sec:3}, $(\R^2 , d_{\infty}, \mu)$ satisfies the $1$-ADC. However, we will see in the following proposition that $(\R^2 , d_1 , \mu)$ does not satisfy the $1$-ADC. Hence, the $1$-ADC can discriminate between different norm-induced distances. 
 
\begin{prop} 
If $\mu $ is as above, then $(\R^2 , d_1, \mu)$ does not satisfy the $1$-ADC.   
\end{prop}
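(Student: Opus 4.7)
The plan is to show that the $1$-ADC fails at the origin. Concretely, I will take $R = 1/2$ and $r = 1/2 - \eta_n$ for a sequence $\eta_n \searrow 0$, and show that
$$
\frac{R}{R-r}\cdot\frac{\mu(B_1(0,R)\setminus B_1(0,r))}{\mu(B_1(0,R))}\;\gtrsim\;\eta_n^{\gamma-1}
$$
for some exponent $\gamma<1$ depending only on $\varepsilon$; since $\eta_n^{\gamma-1}\to\infty$, this gives the required blow-up.

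The key is to analyse the masses of the two thin diagonal strips adjacent to the line $y_1+y_2=1/2$ inside $Q_0$:
$$
H^-(\eta)=\mu\bigl(\{y\in Q_0:1/2-\eta<y_1+y_2<1/2\}\bigr),\qquad H^+(\eta)=\mu\bigl(\{y\in Q_0:1/2<y_1+y_2<1/2+\eta\}\bigr).
$$
For $\eta<1/6$, the combined strip $\{|y_1+y_2-1/2|<\eta\}$ meets only the gen-$1$ N-edge, E-edge and NE-corner subsquares of $Q_0$. Pulling the strip back to $Q_0$ via the three natural affine rescalings produces, in each case, a strip of width $3\eta$ about a line with the same slope. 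The crucial observation is that the NE-corner rescaling sends the line $y_1+y_2=1/2$ to $u_1+u_2=-1/2$, so after applying the central symmetry $u\mapsto -u$ of $\mu$ it exchanges the ``below-$1/2$'' strip with the ``above-$1/2$'' strip. This yields the coupled self-similar system
$$
H^-(\eta)=2a_1\,H^-(3\eta)+a_2\,H^+(3\eta),\qquad H^+(\eta)=2a_1\,H^+(3\eta)+a_2\,H^-(3\eta).
$$

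Adding the two equations gives $(H^-+H^+)(\eta)=(2a_1+a_2)(H^-+H^+)(3\eta)$. For the chosen parameters $a_1=1/9+2\varepsilon$, $a_2=1/9-\varepsilon$, the coefficient $2a_1+a_2=1/3+3\varepsilon$ strictly exceeds $1/3$ once $\varepsilon>0$, and iterating yields $(H^-+H^+)(\eta)\asymp\eta^{\gamma}$ with $\gamma=1-\log_3(1+9\varepsilon)<1$. Subtracting gives $(H^--H^+)(\eta)=O(\eta^{\gamma'})$ with $\gamma'=\log_3(1/(2a_1-a_2))>\gamma$, so both $H^-(\eta)$ and $H^+(\eta)$ are individually of order $\eta^{\gamma}$.

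Finally, by the $4$-fold symmetry of $\mu$, the annulus mass equals four times its first-quadrant portion $\mu(\{y_1,y_2\ge0,\ 1/2-\eta<y_1+y_2\le1/2\})$, which differs from $H^-(\eta)$ only by two thin triangular regions in the SE and NW quadrants. A separate, one-step self-similarity argument confined to the E-edge and N-edge subsquares shows these triangles have mass of order $\eta^{\log_3(1/a_1)}$, and since $a_1<1/3$ this exponent exceeds $1>\gamma$, so the triangular corrections are negligible compared with $H^-(\eta)$. Hence $\mu(B_1(0,1/2)\setminus B_1(0,1/2-\eta))\gtrsim\eta^{\gamma}$, whereas $(R-r)/R\cdot\mu(B_1(0,1/2))\asymp\eta$, producing the claimed $\eta^{\gamma-1}$ blow-up and the failure of the $1$-ADC. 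The main difficulty I anticipate is the bookkeeping in the second step, and in particular the identification of the NE-corner contribution as an $H^+$-strip via the reflection symmetry of $\mu$—this is precisely the place where the diagonal geometry of $d_1$ fails to match the axis-aligned condition \eqref{eqs} that underlies the $d_\infty$-ADC.
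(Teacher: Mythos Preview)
Your argument is correct and rests on the same mechanism as the paper's proof: the self-similarity of $\mu$ along a diagonal of slope $\pm 1$ produces a one-step transition factor $2a_1+a_2=\tfrac{1}{3}+3\varepsilon>\tfrac{1}{3}$, which forces the mass of thin diagonal strips (or chains) to decay like $\eta^{\gamma}$ with $\gamma=1-\log_3(1+9\varepsilon)<1$, hence more slowly than $\eta$.

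The implementations differ. The paper centers the $d_1$-ball at $(0,\tfrac12)$ with radius $R=1$, builds a discrete chain $\mathcal{C}_n$ of $3^n$ cubes in $\mathcal{F}_n$ along a diagonal, and claims directly (by a recursion whose details are omitted) that $\mu(\mathcal{C}_n)=(2a_1+a_2)^n=(\tfrac13+3\varepsilon)^n$. It then notes that a trapezoid $T_n$ of diagonal width $3^{-n}$ is contained in the annulus $B_1((0,\tfrac12),1)\setminus B_1((0,\tfrac12),1-3^{-n})$ and uses the doubling property to get $\mu(T_n)\gtrsim\mu(\mathcal{C}_n)$. This choice of center and the trapezoid containment sidestep all boundary corrections. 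Your route is more analytic: you center at the origin, work with continuous strips $H^{\pm}$, derive and solve the coupled recursion, and must then separately estimate and subtract the two corner triangles. The trade-off is that the paper's argument is shorter and cleaner, whereas yours yields the sharper two-sided asymptotic $H^{-}(\eta)\asymp\eta^{\gamma}$ (and, incidentally, identifies the secondary exponent $\gamma'=\log_3\!\big(1/(2a_1-a_2)\big)$), rather than only the lower bound needed for the proposition.
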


\begin{proof}
We first define the cubes 
$$
Q^n_0 = \bigg[-\frac{1}{2 \cdot 3^n}, \frac{1}{2\cdot 3^n }\bigg) \times \bigg [-\frac{1}{2}, -\frac{1}{2} + \frac{1}{3^n} \bigg )  
$$
and $Q^n_j = Q^n_0 + j\,3^{-n}(1,1)$, where $j = 1,2,\ldots,3^n-1$. 
Now let $\displaystyle \mathcal{C}_n = \bigcup_{j=0}^{3^n -1} Q^n_j$. Observe that $\mathcal{C}_n$ is a chain of $3^n$ cubes in $\mathcal{F}_n$ which is "parallel" to the vector $(1,1)$ in the sense that the union of the diagonals of the cubes $\{ Q^n_j \}$ consists exactly on the segment joining the points $  ( -\frac{1}{2\cdot 3^n}, -\frac{1}{2})$ and $ ( 1- \frac{1}{2\cdot 3^n}, \frac{1}{2})$.

From a recursive argument, whose details we omit here, we claim that  
\begin{equation}\label{chain2}
\mu (\mathcal{C}_n ) = (2b+a)^n = \Big(\frac{1}{3} +3\varepsilon \Big)^n.
\end{equation}
Consider now the trapezoid $T_n$ of vertex $(1, \frac{1}{2})$, $(1-\frac{1}{3^n} , \frac{1}{2})$, $(0, -\frac{1}{2})$, $(0, -\frac{1}{2} + \frac{1}{3^n} )$. From the doubling property, there is a fixed constant $C>0$ such that $ \mu (T_n ) \geq C \mu (\mathcal{C}_n )$. To finish the proof, observe that 
$$
T_n \subset B_1 \bigg(\Big(0,\frac{1}{2}\Big), 1\bigg) \setminus B_1 \bigg(\Big(0,\frac{1}{2}\Big), 1-\frac{1}{3^n} \bigg).
$$
If $R= 1$ and $r_n= 1- \frac{1}{3^n}$ then, from \eqref{chain2},
$$
\mu \bigg( B_1 \bigg(\Big(0,\frac{1}{2}\Big),R\bigg) \setminus B_1 \bigg(\Big(0,\frac{1}{2}\Big),r_n\bigg)\bigg) \geq  C (1 +9\varepsilon)^n \, \frac{R-r_n}{R}  ,  
$$
which shows that $(\R^2 , d_1 , \mu ) $ does not satisfy the $1$-ADC.  
\end{proof}

\begin{question}
Can the construction in the Theorem be modified to  construct examples of singular measures $\mu$ on $\R^N$ such that $(\R^N , d_2 , \mu )$ satisfies the strong annular decay condition? 
\end{question}

\begin{remark}
 It is perhaps illuminating to interpret the $1$-ADC in terms of maximal functions. Let us restrict to the case of the euclidean distance $d_2$. For a given Borel measure $\mu$ on $\R^N$ define the maximal density
$$
\mu^* (x) = \sup_{0< \varepsilon \leq 1 } \frac{\mu (B(x, \varepsilon))}{\varepsilon^n}.
$$
Then an standard covering argument shows that $(\R^N, d_2 ,\mu )$ satisfies the $1$-ADC provided 
$$
\fint_{\partial B(x,R)} \hspace{-0.2cm} \mu^* (y) d\sigma (y) \leq C \frac{\mu (B(x,R))}{R^N},
$$
where $\sigma$ denotes surface measure on balls. 
\end{remark}

\noindent
The strong annular decay condition suggests some questions on weights with a noticeable Harmonic Analysis flavor. We display some of them here. 

\begin{question} Is it possible to characterize the doubling weights  $f>0$ on $\R^N$ satisfying
$$
\fint_{\partial B} \hspace{-0.2cm} f(y) \, d\sigma (y)  \leq C \, \fint_{B} \hspace{-0.1cm} f(x) \, dx 
$$
for any ball $B\subset \R^N$? 
\end{question}

\begin{question}  Denote by $\mathcal{M}$ and $\mathcal{M}_s$ the usual and spherical Hardy-Littlewood maximal operators respectively. How to give alternative characterizations of the (doubling) weights $f>0$ such that 
$$
\mathcal{M}_s f (x) \leq C \mathcal{M}f (x)
$$
uniformly in $x\in \R^N$? 
\end{question}


\begin{thebibliography}{1}

\bibitem{AGG}
\newblock \textsc{T. Adamowicz, M. Gaczkowski, P. G\'orka},
\newblock \emph{Harmonic functions on metric measure spaces},
\newblock Rev. Mat. Complutense,
\newblock https://doi.org/10.1007/s13163-018-0272-7 (2018).

\bibitem{AL1}
\newblock \textsc{\'A. Arroyo, J.G. Llorente},
\newblock \emph{On the Dirichlet Problem for solutions of a restricted nonlinear mean value property},
\newblock Differential and Integral Equations \textbf{29} (2016), no. 1-2, 151--166.

\bibitem{AL2}
\newblock \textsc{\'A. Arroyo, J.G. Llorente},
\newblock \emph{A priori H\"older and Lipschitz regularity for generalized $p$-harmonious functions in metric measure spaces},
\newblock Nonlinear Analysis \textbf{168} (2018), 32--49.

\bibitem{AR}
 \newblock \textsc{P. Auscher, E. Routin},
 \newblock \emph{Local $Tb$ theorems and Hardy inequalities}, 
 \newblock J. Geom. Anal. \textbf{23} (2013), no. 1, 303--374. 

\bibitem{BA}
\newblock \textsc{A. Beurling, L.V. Ahlfors},
\newblock \emph{The boundary correspondence under quasiconformal mappings}, 
\newblock Acta Math. \textbf{96} (1956), 125--142. 

\bibitem{BB}
\newblock \textsc{A. Bj\"{o}rn, J. Bj\"{o}rn}, 
\newblock \emph{The annular decay property and capacity estimates for thin annuli}, 
\newblock Collect. Math. \textbf{68(2)} (2017), 229--241.

\bibitem{B}
\newblock \textsc{S. Buckley},
\newblock \emph{Is the maximal function of a Lipschitz function continuous?},
\newblock Ann. Acad. Sci. Fenn. Math. \textbf{24} (1999), no. 2, 519--528. 

\bibitem{CFK}
\newblock \textsc{L. Caffarelli, E.B. Fabes, C. Kenig}, 
\newblock \emph{Completely singular elliptic-harmonic measures},
\newblock Indiana Univ. Math. J. \textbf{30} (1981), 917--924.  


\bibitem{CM}
\newblock \textsc{T.H. Colding, W.P. Minicozzi II},
\newblock \emph{Liouville theorems for harmonic sections and applications},
\newblock Comm. Pure Appl. Math.  \textbf{51} (1998), 113--138.

\bibitem{H1}
\newblock \textsc{Y. Heurteaux}, 
\newblock \emph{Sur la comparaison des mesures avec les mesures de Hausdorff}, 
\newblock C.R. Acad. Sci. Paris S\'er. I Math \textbf{321} (1995), no. 1, 61--65. 

\bibitem{H2}
\newblock \textsc{Y. Heurteaux}, 
\newblock \emph{Dimension of measures: the probabilistic approach}, 
\newblock Publ. Mat. \textbf{51} (2007), no. 2, 243--290. 

\bibitem{K}
\newblock \textsc{J.P. Kahane},
\newblock \emph{Trois notes sur les ensembles parfaits lin\'eaires}, 
\newblock Enseignement Math. \textbf{15} (1969), 185--192.  

\bibitem{KiS1}
\newblock \textsc{J. Kinnunen, P. Shukla}, 
\newblock \emph{Gehring's Lemma and reverse H\"{o}lder classes on metric measure spaces}, 
\newblock Comput. Methods Funct. Theory \textbf{14} (2014), 295--314. 

\bibitem{KiS2}
\newblock \textsc{J. Kinnunen, P. Shukla}, 
\newblock \emph{The structure of reverse H\"{o}lder classes on metric measure spaces},
\newblock Nonlinear Analysis \textbf{95} (2014), 666--675. 

\bibitem{KiS3}
\newblock \textsc{J. Kinnunen, P. Shukla}, 
\newblock \emph{The distance of $L^{\infty}$ to $BMO$ on metric measure spaces},
\newblock Adv. Pure Appl. Math. \textbf{5} (2014), no. 2, 117--129. 

\bibitem{L}
\newblock \textsc{J.G. Llorente}, 
\newblock \emph{On the Gehring-Hayman property, the Privalov-Riesz theorems and doubling measures},
\newblock Michigan Math. J. \textbf{54} (2004), no. 3, 553--571. 

\bibitem{LN}
\newblock \textsc{J.G. Llorente, A. Nicolau},
\newblock \emph{Regularity properties of measures, entropy and the Law of the Iterated Logarithm}, 
\newblock Proc. London Math. Soc. \textbf{(3) 89} (2004), 485--524.  


\bibitem{S}
\newblock \textsc{A.H. Shiryaev}, 
\newblock \emph{Probability}, 
\newblock Springer (1991). 

\bibitem{Z}
\newblock \textsc{A. Zygmund}, 
\newblock \emph{Trigonometric Series}, Vol. I, II,
\newblock Cambridge Uni. Press (2002). 

\end{thebibliography}
\end{document}